\newtheorem{theorem}{Theorem}
\newtheorem{proposition}{Proposition}
\newtheorem{lemma}{Lemma}
\newtheorem{problem}{Problem}
\newtheorem{MTA}{Theorem A}
\newtheorem{proof}{\textmd{\textit{Proof.}}}
\newtheorem{remark}{Remark}
\newtheorem{example}{Example}
\newcommand{\qedd}{\hfill \Box}
\newcommand{\R}{\ensuremath{\mathbb{R}}}
\newcommand{\Sph}{\ensuremath{\mathbb{S}}}
\title{A new family of latitudinally corrugated two-spheres of revolution with simple cut locus structure
\footnote{
Mathematics Subject Classification (2010)\,:\,53C22.}
\footnote{
Keywords:  
  cut point,  half period function, simple cut locus structure, surface of revolution.}
}
\author{ Minoru Tanaka, Toyohiro Akamatsu, \\ Robert Sinclair,  and Masaru Yamaguchi }
\date{}
\begin{document}

%%%%%%%%%%%%%%%%%%%%%%%%%%%%%%%%%%%%%%%%%%%%%%%%%%%

\maketitle

\begin{abstract}
There are not so many kinds of surface of revolution whose cut locus structure have been determined, although the cut locus structures of very familiar surfaces of revolution (in Euclidean space) such as ellipsoids, 2-sheeted hyperboloids, paraboloids and tori are now known.

Except for tori, the known cut locus structures are very simple, i.e., a single point or an arc.
In this article, a new family $\{M_n\}_n$ of 2-spheres of revolution with simple cut locus structure 
is introduced. This family is also new in the sense that the number of points on each meridian which assume a local minimum or maximum of the Gaussian curvature function
on the meridian goes to infinity as $n$ tends to infinity.

Thus, our family includes surfaces which have arbitrarily many bands of alternately
increasing or decreasing Gaussian curvature, although each member of this family has a simple cut locus structure.
\end{abstract}

\section{Introduction}

Determining the structure of the cut locus for a Riemannian manifold
is very difficult, even for a surface of revolution.
In 1994, Hebda \cite{H}  proved that the cut locus of a point $p$ in a complete 2-dimensional Riemannian manifold has a local tree structure and that the cut locus
is locally the image of a Lipschitz map from a subarc of the unit circle  in the unit tangent plane at $p$
(see also \cite{IT} and \cite[Theorem 4.2.1]{SST}).

In general, one cannot determine the structure of the cut locus in more detail for even a 2-dimensional Riemannian manifold.
 In fact, Gluck and Singer \cite{GS}
constructed a 2-sphere of revolution with positive Gaussian curvature which admits
a non-triangulable cut locus.
Fortunately, the cut locus structure has been determined for very familiar surfaces in Euclidean space, such as
paraboloids, hyperboloids, ellipsoids and tori of revolution (see \cite{E,GMST, IK1, IK2, ST2,T}).

%%%%%%%%%%%%%
%%%%%%%%%%%%%%%%%%%%%%%%%%%%%%

These familiar surfaces, which are of revolution, are very useful as model surfaces, for obtaining global structure theorems of
Riemannian manifolds, if the cut locus structure is simple.
For example, special 2-spheres of revolution have been employed as model surfaces to obtain various sphere theorems
(see \cite{B, IMS, K}, for example).

Let $\gamma :[0,t_0]\to M$ denote a minimal geodesic segment on a complete Riemannian manifold $M.$ The endpoint $\gamma(t_0)$ is called a {\it cut point} of $p:=\gamma(0)$ along $\gamma$ if any geodesic extension of $\gamma$ (beyond $\gamma(t_0)$) is not minimal anymore. The {\it cut locus} of the point $p$ is defined as the set of cut points along all minimal geodesic segments emanating from $p.$

Let us introduce a new Riemannian metric $g$ on the unit 2-sphere $\Sph^2.$
Choose any smooth function $m:[0,\pi]\to [0,\infty)$ satisfying
\begin{equation}\label{1.1}
m(0)=0,\quad m'(0)=1,
\end{equation}
and
\begin{equation}\label{1.2}
m(\pi-r)=m(r)>0 
\end{equation}
for all $r\in(0,\pi).$
Then, one can introduce a Riemannian metric 
\begin{equation}\label{1.3}
g=dr^2+m(r)^2d\theta^2
\end{equation}
on the open subset  ${\Sph}^2\setminus\{N,S\}$ of ${\Sph}^2$, the poles removed from the sphere, where $(r,\theta)$ denotes geodesic polar coordinates around the south pole $S$, and $N$ denotes the north pole.

If $m$ is extendable to a smooth odd function around $r=0,$ then
it was proven in \cite[Lemma 7.1.1]{SST} that the Riemannian metric $g$ is extendable to a smooth one on the entirety of $\Sph^2.$

In this article, the Riemannian manifold $(\Sph^2,g)$ with (smooth) Riemannian metric $g$ in \eqref{1.3} is called a {\it 2-sphere of revolution} (with reflective symmetry).
By \eqref{1.2}, $(\Sph^2,g)$  is reflectively symmetric with respect to the {\it equator} $r=\pi/2.$
It is easy to check that the cut locus of the south pole $S$ of the 2-sphere of revolution is $\{N\},$ and vice versa (see \cite[Lemma 2.1]{ST2} for example).

The unit sphere is a typical example of a 2-sphere of revolution, whose metric is 
defined by $dr^2+\sin^2r d\theta^2.$
Ellipsoids of revolution are also 2-spheres of revolution with reflective symmetry.
They have the following remarkable property:

\noindent
{\it The Gaussian curvature is monotone from the south pole to the point on the equator
along each meridian.}

A {\it meridian} is the periodic geodesic $\{\theta=\theta_0\}\cup\{\theta=\pi+\theta_0\}\cup\{N,S\}$
for some constant $\theta_0\in(0,\pi].$
A {\it parallel} is the curve $r=r_0$ for some constant $r_0\in(0,\pi).$

It was proven in \cite{ST2} that the cut locus is a single point or an arc for 2-spheres of revolution with the same monotonicity property of the Gaussian curvature as ellipsoids.

%%%%%%%%%%%%
Moreover, detailed numerical case studies described in \cite{ST1} led to a conjecture that the monotonicity properties of the
Gaussian curvature strongly control the cut locus structure for various kinds of surfaces of revolution.
Thus, 
due to the example by Gluck and Singer and the theorem and conjecture mentioned above,
the monotonicity properties of the Gaussian curvature may seem to be a suitable and reasonable sufficient condition for a 2-sphere of revolution to admit a simple cut locus structure.
However, a family of 2-spheres  $\{M_\lambda\}_\lambda:=\{(\Sph^2, dr^2+m_\lambda(r)^2d\theta^2)\}_{\lambda}$
of revolution with simple cut locus structure, but violating the monotonicity property, was introduced  in \cite{BCST}.
The Riemannian metric $dr^2+m_\lambda^2(r)d\theta^2$ of $M_\lambda$
is defined by
\begin{equation}\label{1.4}
m_\lambda(r):=\frac{\sqrt{\lambda+1}\sin r}{\sqrt{1+\lambda\cos^2r}}
\end{equation}
for each real number $\lambda\geq 0.$
It was proven in \cite{BCST} that for each $\lambda\geq0,$ 
the cut locus of a point $p\in r^{-1}(0,\pi)\subset M_\lambda$ is a point or subarc of the antipodal parallel
$r=\pi-r(p), $ and it was shown in \cite[Lemma 4.1]{BCST} that the Gaussian curvature of $M_\lambda$ assumes a unique minimum
on the parallel $r=\arccos\sqrt{2/\lambda}$ between the south pole and the equator along each meridian
if $\lambda>2.$
Incidentally, the Gaussian curvature of $M_\lambda$ is increasing along each meridian from the pole $r=0$ to the point on the equator $r=\pi/2$ if $\lambda\in[0,2].$

In this article, we introduce a new family of 2-spheres  $\{M_n\}_n$ of revolution
with the same cut locus structure as that of $\{M_\lambda\}_\lambda,$
but the number of points on each meridian which assume a local minimum or maximum
of the Gaussian curvature function along the meridian
goes to infinity as $n$ tends to infinity.

For each integer $n\geq2,$
we define a smooth function
$m_n:[0,\pi]\to [0,\infty)$ 
by
\begin{equation}\label{1.5}
m_n(r):=3\sin\left( r-\frac{\sin2r}{3}+\frac{\sin^22r\cdot\sin2n^2r}{n^5}\right).
\end{equation} 
Since
it is clear that the function $m_n(r)$ is extendable as a smooth odd function on $\R,$
we have a family of 2-spheres $\{M_n\}_n $ of revolution with Riemannian 
metric
$dr^2+m_n(r)^2d\theta^2.$

In this article, we will prove 
\begin{MTA}
Let $\{M_n\}_{ n }$ denote a family of 2-spheres of revolution with Riemannian metric 
 $dr^2+m_n(r)^2d\theta^2,$  where  
$m_n(r)$ denotes  the function defined by the equation \eqref{1.5}.
Then, there exists a number $n_0$ such that the cut locus of each point 
$p\in r^{-1}(0,\pi)$ is a subarc of the antipodal parallel $r=\pi-r(p)$ for all $n>n_0.$
Furthermore,
the number of points on each meridian which assume a local maximum or minimum of the Gaussian curvature function on the meridian goes to infinity as $n$ tends to infinity.
\end{MTA}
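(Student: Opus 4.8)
The plan is to record the structure of $m_n$ first and then exploit the fact that the two assertions are governed by different orders of derivative of $m_n$. Write $m_n(r)=3\sin\phi_n(r)$ with $\phi_n=\phi_\infty+\varepsilon_n$, where $\phi_\infty(r)=r-\tfrac13\sin 2r$ and $\varepsilon_n(r)=n^{-5}\sin^2(2r)\sin(2n^2r)$. A direct check gives $\phi_n(0)=0$, $\phi_n(\pi-r)=\pi-\phi_n(r)$ (hence $m_n(0)=0$, $m_n'(0)=3\phi_n'(0)=1$, and the reflective symmetry $m_n(\pi-r)=m_n(r)$), while $\phi_\infty'(r)=1-\tfrac23\cos 2r\ge\tfrac13$. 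Since $\varepsilon_n^{(k)}=O(n^{2k-5})$, one has $\phi_n'=\phi_\infty'+O(n^{-3})>0$ for large $n$, so $\phi_n$ is an increasing bijection of $[0,\pi]$ fixing $\tfrac\pi2$, and therefore $m_n$ is a reflectively symmetric, unimodal profile with a unique maximum $m_n(\tfrac\pi2)=3$; together with the smooth odd extension this makes $M_n$ a $2$-sphere of revolution of the type in the introduction. The quantitative key is the scaling $\varepsilon_n^{(k)}=O(n^{2k-5})$: the perturbation tends to $0$ in $C^2$ but is unbounded in $C^3$, so $m_n,m_n',m_n''\to m_\infty,m_\infty',m_\infty''$ uniformly (writing $m_\infty=3\sin\phi_\infty$) whereas $m_n'''$ blows up. Because the curvature is $K_n=-m_n''/m_n$ and the half-period function below sees $m_n$ only up to second order, the two assertions decouple exactly across this $C^2$/$C^3$ threshold.

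For the cut-locus assertion I would appeal to the criterion, standard for reflectively symmetric $2$-spheres of revolution with unimodal profile and used in the works quoted in the introduction, that strict monotonicity of the half-period function
\[
\varphi_n(\nu)=2\int_{\xi_n(\nu)}^{\pi/2}\frac{\nu\,dr}{m_n(r)\sqrt{m_n(r)^2-\nu^2}},\qquad m_n(\xi_n(\nu))=\nu,\ \ \xi_n(\nu)\in(0,\pi/2),
\]
on $(0,3)$ forces the cut locus of every $p\in r^{-1}(0,\pi)$ to be a subarc of the antipodal parallel $r=\pi-r(p)$. The decisive point is that the integrand of $\varphi_n$ involves $m_n$ but no derivative of it, so $\varphi_n$ depends only on $C^0$ data of $m_n$; and after regularizing the square-root singularity at the lower endpoint (for instance $r-\xi_n=t^2$, which introduces $\xi_n'(\nu)=1/m_n'(\xi_n)$ and the boundary coefficient $\sqrt{2/(\nu\,m_n'(\xi_n))}$), the derivative $\varphi_n'$ is expressible through $m_n,m_n',m_n''$ alone and never through $m_n'''$. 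Hence $\varphi_n'\to\varphi_\infty'$ uniformly on compact subsets of $(0,3)$. It then remains to verify that $\varphi_\infty'$ has constant sign and is bounded away from $0$ for the explicit base profile $m_\infty=3\sin(r-\tfrac13\sin 2r)$, after which $\varphi_n'$ inherits that sign for all large $n$ and the criterion applies.

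For the curvature assertion I would track $K_n'=-(m_n'''m_n-m_n''m_n')/m_n^2$. From $m_n'''=3\cos\phi_n\,\phi_n'''+O(1)$ together with $\phi_n'''=-8n\sin^2(2r)\cos(2n^2r)+O(1)$, the leading behaviour is
\[
K_n'(r)=8n\,\cot\phi_\infty(r)\,\sin^2(2r)\,\cos(2n^2r)+O(1)
\]
uniformly on $(0,\pi)$. On the fixed subinterval $[\pi/8,3\pi/8]$ one has $\sin^2 2r\ge\tfrac12$ and $\cot\phi_\infty$ bounded below by a positive constant, so the oscillating term has amplitude at least $cn$ for some $c>0$; evaluating $K_n'$ at the consecutive points where $\cos(2n^2r)=\pm1$ shows, for $n$ large, that $K_n'$ alternates sign there. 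Thus $K_n'$ has of order $n^2$ zeros in $[\pi/8,3\pi/8]$, so $K_n$ has at least of order $n^2$ local extrema along each meridian, which tends to infinity.

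The main obstacle is the first half, and within it the boundary analysis. Establishing the uniform sign of $\varphi_n'$ up to the endpoints $\nu\to0^+$ and $\nu\to3^-$ — where $\varphi_\infty'$ may degenerate and where the meridian limit ($\varphi_\infty\to\pi$) and the equatorial limit (governed by $K_\infty(\tfrac\pi2)$) must be invoked — requires handling the improper integral carefully, and the clean statement that $\varphi$ reads only $C^2$ data must be seen to survive this. The remaining genuinely computational point is checking $\varphi_\infty'\neq0$ for the fixed profile $m_\infty$; should a direct estimate be unwieldy, the fallback is to obtain the monotonicity through the comparison and model-surface machinery rather than by differentiating $\varphi_\infty$ explicitly. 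By contrast, the second half rests only on the leading $n$-asymptotics of $m_n'''$ and is routine.
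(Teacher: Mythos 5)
Your second half (the curvature count) is essentially the paper's own argument: the authors also evaluate the derivative of the Gaussian curvature at the points $t_k^{(n)}=k\pi/(2n^2)$, isolate the alternating term $(-1)^k 8nB(t_k^{(n)})$ coming from the third derivative of the perturbation, bound everything else uniformly, and conclude sign alternation on a fixed subinterval where $B$ is bounded away from zero. That part of your proposal is sound.

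The first half has a genuine gap, and it is exactly the one you flag yourself. The criterion you invoke requires strict monotonicity of the half period function $\varphi_n$ on \emph{all} of $(0,3)$, but locally uniform convergence $\varphi_n'\to\varphi_\infty'$ on compact subsets of $(0,3)$ cannot deliver that: near $\nu\to 0^+$ and $\nu\to 3^-$ (i.e.\ near the pole and the equator) you have no control, and you also never actually verify that $\varphi_\infty'$ is nonzero, or even which sign it has --- a point that matters, since the opposite sign would put the cut locus on the opposite half meridian rather than on the antipodal parallel. As written, the cut-locus assertion is not proved. The paper's route avoids differentiating $\varphi_n$ altogether. Writing $m_n=a\sin h_n$ with $h_n=h_0+R_n$, $h_0(x)=x-\alpha\sin 2x$, a change of variables in the integral (Lemma \ref{lem2.1}) shows that $\varphi_n$ is strictly decreasing on the whole interval $(0,a)$ as soon as $\sqrt{a^2-m_n^2}/m_n'=1/h_n'$ is strictly decreasing, i.e.\ as soon as $h_n''>0$ on $(0,\pi/2)$. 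This pointwise convexity condition is stable under the perturbation precisely because the degeneration at the endpoints is matched: $h_0''(x)=4\alpha\sin 2x$ vanishes at $x=0,\pi/2$ at the same rate at which $|R_n^{B}{}''(x)|$ is forced to vanish there (via $|\sin 2n^2x|\le n^2|\sin 2x|$ and $B(0)=B(\pi/2)=B'(0)=B'(\pi/2)=0$), so $\sup_{(0,\pi/2)}|R_n^{B}{}''|/h_0''\to 0$ and $h_n''>0$ for large $n$. That ratio estimate is the structural fact your $C^2$-smallness argument misses, and it is what resolves the endpoint problem you correctly identified as the main obstacle. To repair your proof you would either need to carry out the endpoint analysis of $\varphi_n'$ in detail, or switch to this pointwise criterion.
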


The family in {\bf Theorem A} confirms that the monotonicity of the Gaussian curvature is not a suitable or reasonable sufficient condition for a 2-sphere of revolution to admit a simple cut locus structure.

In Section 2, the first claim of {\bf Theorem A}  will be proven and the latter claim in Section 3.

%%%%%%%%%%%%%%%%%%%%%%%%%%%%%%%%%

%%%%%%%%%%%%%%

%%%%%%%%%%%%%%%%%%%%%%%%%%%%%%%%%%%%%%%%%%%%

\section{Preliminaries}%Sec2

Let $f:[0,\pi/2]\to [0,\infty)$ denote a smooth function such that $f'(x)>0$ on the open interval $(0,\pi/2),$ and $f(0)=f'(\pi/2)=0.$
By making use of $f$,
the {\it half period function} $\varphi_f$ on $(0,f(\pi/2))$ is defined by
\begin{equation}\label{2.1}
\varphi_f(\nu):=2\int_{f^{-1}(\nu)}^{\pi/2} \frac{\nu}{f(x)\sqrt{f(x)^2-\nu^2}  } \, dx.
\end{equation}

\begin{lemma}\label{lem2.1}
If the function
$A(x):=\sqrt{a^2-f(x)^2}/f'(x),$  where $a:=f(\pi/2)$, is decreasing (respectively increasing)  on $(0,\pi/2)$, then
the function $\varphi_f(\nu)$ 
is decreasing (respectively increasing) on $(0,a).$
Furthermore, if the function $A(x)$ is strictly decreasing (respectively strictly increasing),
then the function $\varphi_f(\nu)$ is also strictly decreasing (respectively strictly increasing).
\end{lemma}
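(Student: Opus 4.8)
The plan is to strip the $\nu$-dependence out of both the limits of integration and the singular weight, funnelling all of it into a single factor whose monotonicity in $\nu$ is transparent. First I would substitute $\xi=f(x)$ in \eqref{2.1}. Since $f$ is strictly increasing on $(0,\pi/2)$, this is a legitimate change of variable with $dx=d\xi/f'(f^{-1}(\xi))$, and the limits become $\xi=\nu$ and $\xi=a$. Writing $B(\xi):=A(f^{-1}(\xi))$ and noting that $A(f^{-1}(\xi))=\sqrt{a^2-\xi^2}\,/\,f'(f^{-1}(\xi))$, the reciprocal derivative is $1/f'(f^{-1}(\xi))=B(\xi)/\sqrt{a^2-\xi^2}$, so that
\[
\varphi_f(\nu)=2\int_\nu^a \frac{\nu\,B(\xi)}{\xi\,\sqrt{\xi^2-\nu^2}\,\sqrt{a^2-\xi^2}}\,d\xi .
\]
Because $f^{-1}$ is increasing, $B$ inherits the monotonicity of $A$: if $A$ is (strictly) decreasing then $B$ is (strictly) decreasing, and similarly in the increasing case.

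The crux is the second substitution
\[
\xi=\xi(\psi,\nu):=\frac{a\nu}{\sqrt{a^2\cos^2\psi+\nu^2\sin^2\psi}},\qquad \psi\in[0,\pi/2],
\]
which sends $\psi=0$ to $\xi=\nu$ and $\psi=\pi/2$ to $\xi=a$. A direct computation gives $\xi^2-\nu^2=\nu^2(a^2-\nu^2)\sin^2\psi/D$ and $a^2-\xi^2=a^2(a^2-\nu^2)\cos^2\psi/D$ with $D:=a^2\cos^2\psi+\nu^2\sin^2\psi$, while $\xi\,d\xi=a^2\nu^2(a^2-\nu^2)\sin\psi\cos\psi\,D^{-2}\,d\psi$. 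When these are combined, every factor except $B(\xi)$ cancels (using $\xi^2 D=a^2\nu^2$), and the integral collapses to the remarkably clean form
\[
\varphi_f(\nu)=\frac{2}{a}\int_0^{\pi/2} B\bigl(\xi(\psi,\nu)\bigr)\,d\psi .
\]
Arriving at this identity is the main obstacle: one must choose exactly the substitution that simultaneously fixes the endpoints, cancels the two square-root singularities, and isolates all $\nu$-dependence inside $B$. I expect the bookkeeping in this cancellation to be the only delicate calculation, and it is routine once the correct substitution is guessed.

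With this identity in hand the conclusion is immediate and requires no differentiation under the integral sign. Rewriting $\xi(\psi,\nu)=a\,\bigl((a^2/\nu^2)\cos^2\psi+\sin^2\psi\bigr)^{-1/2}$ shows that, for each fixed $\psi\in[0,\pi/2)$, the factor $\xi(\psi,\nu)$ is strictly increasing in $\nu$ (and constant, equal to $a$, only at $\psi=\pi/2$). Hence if $A$, equivalently $B$, is decreasing, then for $\nu_1<\nu_2$ we have $B(\xi(\psi,\nu_1))\ge B(\xi(\psi,\nu_2))$ for every $\psi$, and integrating over $[0,\pi/2]$ yields $\varphi_f(\nu_1)\ge\varphi_f(\nu_2)$, so $\varphi_f$ is decreasing. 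If $A$ is strictly decreasing, the pointwise inequality is strict on the full-measure set $[0,\pi/2)$, forcing $\varphi_f(\nu_1)>\varphi_f(\nu_2)$; the increasing case is identical with all inequalities reversed. The only point needing care is to confirm that $B$ is bounded and continuous up to $\xi=a$ (equivalently that $A$ extends continuously to $x=\pi/2$, which follows from the smoothness of $f$ together with $f'(\pi/2)=0$ by a short Taylor estimate), so that the improper integral agrees with the transformed one and the monotone comparison is legitimate.
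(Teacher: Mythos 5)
Your proof is correct and follows essentially the same route as the paper: both arguments change variables so that the limits of integration become independent of $\nu$ and all $\nu$-dependence is pushed into the argument of $A\circ f^{-1}$, which is pointwise monotone in $\nu$. Your single substitution $\xi(\psi,\nu)=a\nu/\sqrt{a^2\cos^2\psi+\nu^2\sin^2\psi}$ is exactly the composition of the paper's substitutions $u=f(x)^2$, $s=\sqrt{(u-\nu^2)/(a^2-u)}$, $s=\nu\tau$ followed by $a\tau=\tan\psi$, so the two proofs differ only in parametrization.
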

\begin{proof}
Putting $f(x)^2=u,$
we obtain,
$$\frac{dx}{f(x)\sqrt{f(x)^2-\nu^2}}=\frac{\sqrt{a^2-u}\;du}{2f'(x)\cdot u\sqrt{(u-\nu^2)(a^2-u)}}.$$
Hence, we get,

\begin{equation}\label{2.2}
\varphi_f(\nu)=\nu\int_{\nu^2}^{a^2} A\circ f^{-1}(\sqrt u)\frac{du}{u\sqrt{(u-\nu^2)(a^2-u)}  }.
\end{equation}
Next, setting
$s=\sqrt{\frac{u-\nu^2}{a^2-u} },$
we get
$$u=\frac{a^2s^2+\nu^2}{s^2+1},\quad \frac{du}{ds}=2s\frac{a^2-\nu^2}{(s^2+1)^2},
\quad u\sqrt{(u-\nu^2)(a^2-u)}=s\frac{(a^2-\nu^2)(a^2s^2+\nu^2)}{(s^2+1)^2}.$$
Thus,

\begin{equation}\label{2.3}
\varphi_f(\nu)=\nu\int_{0}^{\infty} A\circ f^{-1}(\sqrt u)\frac{2ds}{a^2s^2+\nu^2 }.
\end{equation}
By setting $s=\nu \tau,$
we have,
\begin{equation}\label{2.4}
\varphi_f(\nu)=2\int_{0}^{\infty} A\circ f^{-1}(\sqrt {u(\tau,\nu)})\frac{d\tau}{a^2\tau^2+1 },
\end{equation}
where
$u(\tau,\nu):=\frac{\nu^2(a^2\tau^2+1)}{\tau^2\nu^2+1}.$ 

Since $u(\tau,\nu)$ is increasing with $\nu$ for each $\tau\in(0,\infty),$ and since 
$A\circ f^{-1}$ is decreasing   (respectively increasing) on $(0,a),$
the function $A\circ f^{-1}(\sqrt { u(\tau,\nu)} )$ is decreasing  (respectively increasing) with $\nu  $ 
for each $\tau\in(0,\infty),$ and  $\varphi_f(\nu)$ is decreasing  (respectively increasing) on $(0, a).$ The latter claim is also obvious from this argument.

%%%%%%%%%%%%%%%%%%%%%%%%%%%%%%%%%%
$\qedd$
\end{proof}
%%%%%%%%%%%%%%%%%%%%%%%%%%%%%%%%%%%%%%%%%%%%%%

\begin{lemma}\label{lem2.2}
If the function
$\frac{-f''}{f}$   is increasing  (respectively decreasing) on $(0,\pi/2)$, then
the function $\varphi_f(\nu)$ 
is decreasing  (respectively increasing) on $(0,a).$
\end{lemma}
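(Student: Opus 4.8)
The plan is to deduce this from Lemma~\ref{lem2.1}: I will show that the hypothesis on $-f''/f$ forces the auxiliary function $A(x)=\sqrt{a^2-f(x)^2}/f'(x)$ (with $a=f(\pi/2)$) to be monotone, and then simply quote that lemma. Concretely, I expect that when $-f''/f$ is increasing, $A$ is decreasing, so $\varphi_f$ decreases; and symmetrically in the other case. Note that since $f$ is smooth and positive on $(0,\pi/2)$, the function $h:=-f''/f$ is smooth there, so $h'$ exists.

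First I would differentiate $A$. A direct computation gives
\[
A'(x)=\frac{-f(f')^2-(a^2-f^2)f''}{(f')^2\sqrt{a^2-f^2}}.
\]
Writing $P:=(f')^2-h\,(a^2-f^2)$ and using $f''=-hf$, the numerator equals $-f\cdot P$; since $f>0$ on $(0,\pi/2)$, the sign of $A'$ is opposite to that of $P$. Thus it suffices to control the sign of $P$: the function $A$ is decreasing exactly where $P\ge 0$, and increasing where $P\le 0$.

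The heart of the argument is the computation of $P'$. Differentiating and substituting the defining relation $f''=-hf$, I expect the terms $2f'f''$ and $2hff'$ to cancel, so that the derivative collapses to
\[
P'(x)=-h'(x)\,(a^2-f(x)^2).
\]
This is the step I regard as the main point — the clean cancellation is what makes the statement work, and it only becomes visible after one introduces $P$ and feeds in the second-order relation $f''=-hf$. Together with the boundary value $P(\pi/2)=0$ (both $f'(\pi/2)$ and $a^2-f(\pi/2)^2$ vanish), the formula for $P'$ finishes everything: if $h$ is increasing then $h'\ge 0$, so $P'\le 0$ on $(0,\pi/2)$ (where $a^2-f^2>0$), whence $P$ is decreasing and therefore $P\ge P(\pi/2)=0$. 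This gives $A'\le 0$, so $A$ is decreasing, and Lemma~\ref{lem2.1} yields that $\varphi_f$ is decreasing on $(0,a)$. The case where $-f''/f$ is decreasing is identical with all the inequalities reversed, giving $P\le 0$, $A$ increasing, and hence $\varphi_f$ increasing.
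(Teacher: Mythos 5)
Your proof is correct and follows essentially the same route as the paper: your $P$ is exactly the paper's auxiliary function $F(x)=(f')^2(x)+\frac{f''}{f}(a^2-f^2)(x)$, the cancellation giving $P'=-h'(a^2-f^2)$ is the paper's identity $F'=(a^2-f^2)\{f''/f\}'$, and the conclusion via $F(\pi/2)=0$ and Lemma~\ref{lem2.1} is identical. No gaps.
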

\begin{proof}
From Lemma \ref{lem2.1}
it is sufficient  to  prove  that the function
$A(x)$ is decreasing  (respectively increasing) on $(0,\pi/2).$
It is easy to check that
\begin{equation}\label{2.5}
A'(x)=\frac{-f(x)} {f'^2(x)\sqrt{a^2-f^2(x)} }F(x),
\end{equation}
where $F(x):= (f')^2(x)+\frac{f''}{f}(a^2-f^2)(x)$
and 
$$F'(x)=(a^2-f^2)\left\{{f''}/{f} \right\}'(x).$$
Hence, $F'(x)\leq0$ (respectively $F'(x)\geq0$).
Since the function $F$ is decreasing (respectively increasing) on $(0,\pi/2)$
and  $F(\pi/2)=0,$
 the function $F$ is positive (respectively negative)   and,  by \eqref{2.5},
$A'(x)\leq0$ (respectively $A'(x)\geq0)$ on $(0,\pi/2).$
$\qedd$
\end{proof}

By making use of Lemma \ref{lem2.1}, one can prove that the half period function $\varphi_\lambda$  for $m_\lambda$ is strictly decreasing without an explicit computation of $\varphi_\lambda$ (see \cite [Proposition 4.3]{BCST}).
\begin{lemma}\label{lem2.3}
The half period function $\varphi_\lambda(\nu)$ for the metric $m_\lambda$ is
strictly
decreasing on $(0,m_{\lambda}(\pi/2))$ for each $\lambda>0.$
\end{lemma}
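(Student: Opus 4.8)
The plan is to invoke Lemma \ref{lem2.1} with $f = m_\lambda$ restricted to $[0,\pi/2]$, thereby avoiding any explicit evaluation of the integral defining $\varphi_\lambda$. First I would confirm that $m_\lambda$ meets the standing hypotheses of that lemma. Clearly $m_\lambda(0)=0$ and $a := m_\lambda(\pi/2) = \sqrt{\lambda+1}$, while a direct differentiation gives
\begin{equation*}
m_\lambda'(r) = (\lambda+1)^{3/2}\,\frac{\cos r}{(1+\lambda\cos^2 r)^{3/2}},
\end{equation*}
so that $m_\lambda'>0$ on $(0,\pi/2)$ and $m_\lambda'(\pi/2)=0$, exactly as required.

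The substance of the argument is the explicit computation of the auxiliary function $A(x) = \sqrt{a^2 - m_\lambda(x)^2}/m_\lambda'(x)$. Using $a^2 = \lambda+1$ together with the elementary identity $1 + \lambda\cos^2 x - \sin^2 x = (\lambda+1)\cos^2 x$, I would simplify the radicand to
\begin{equation*}
a^2 - m_\lambda(x)^2 = (\lambda+1)^2\,\frac{\cos^2 x}{1+\lambda\cos^2 x}.
\end{equation*}
Taking the square root (with $\cos x>0$ on $(0,\pi/2)$) and dividing by the formula above for $m_\lambda'$, the factors of $\cos x$ cancel and the half-integer powers of $1+\lambda\cos^2 x$ collapse, leaving the clean expression
\begin{equation*}
A(x) = \frac{1+\lambda\cos^2 x}{\sqrt{\lambda+1}}.
\end{equation*}

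From here the conclusion is immediate. Differentiating gives $A'(x) = -\lambda\sin 2x/\sqrt{\lambda+1}$, which is strictly negative on $(0,\pi/2)$ for every $\lambda>0$, so $A$ is strictly decreasing there. The strict-monotonicity clause of Lemma \ref{lem2.1} then yields at once that $\varphi_\lambda$ is strictly decreasing on $(0,a) = (0, m_\lambda(\pi/2))$, as claimed.

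I anticipate no real obstacle beyond the algebraic simplification of $A$, where the only point demanding care is the bookkeeping of the half-integer powers of $1+\lambda\cos^2 x$ in the quotient $\sqrt{a^2-m_\lambda^2}/m_\lambda'$. It is worth stressing why I route through Lemma \ref{lem2.1} rather than Lemma \ref{lem2.2}: the quantity $-m_\lambda''/m_\lambda$, namely the Gaussian curvature along a meridian, fails to be monotone once $\lambda>2$, so Lemma \ref{lem2.2} does not apply; it is precisely the fact that $A$ stays strictly monotone for \emph{every} $\lambda>0$ that makes Lemma \ref{lem2.1} the correct instrument here.
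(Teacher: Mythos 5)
Your proposal is correct and follows essentially the same route as the paper: compute $m_\lambda'$ and $\sqrt{a^2-m_\lambda^2}$ explicitly, observe that their quotient simplifies to $(1+\lambda\cos^2 x)/\sqrt{\lambda+1}$, which is strictly decreasing, and then apply Lemma \ref{lem2.1}. The algebra and the conclusion match the paper's proof exactly.
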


\begin{proof}
It is easy to check that 
$$\sqrt{a^2-m_\lambda(r)^2}=\frac{(\lambda+1)\cos r}{\Lambda},$$ 
and
$$m_\lambda'(r)={\left(\sqrt{\lambda+1} /\Lambda \right)^3     \cos r},$$
where
 $\Lambda=\sqrt{1+\lambda\cos^2 r},$ and  $a=m_\lambda(\pi/2)=\sqrt{\lambda+1}.$
Therefore, the function 
$\sqrt{a^2-m_\lambda^2}/m_\lambda'={(1+\lambda\cos^2 r)}/{\sqrt{\lambda+1}}$
is strictly decreasing. By Lemma \ref{lem2.1},
the half period function $\varphi_\lambda$ is strictly decreasing on $(0,m_\lambda(\pi/2)).$
$\qedd$
\end{proof}

\begin{remark}%Remark2.4
From  Lemma \ref{lem2.3} and \cite [Lemma 3.3, Theorem 3.5]{BCST}, it follows that 
the cut locus of a point $q\in r^{-1}(0,\pi)$ of the 2-sphere of revolution $(\Sph^2,dr^2+m_\lambda(r)^2d\theta^2)$ is a subarc of the antipodal parallel of the point $q.$
\end{remark}

Let $h :[0,\pi]\to R$ denote a smooth function which is
extendable to a smooth odd one
such that
\begin{equation}\label{2.6}
h'(x)>0 {\quad \text on \: \:}[0,\pi/2),
\end{equation}
\begin{equation}\label{2.7}
h(\pi-x)=\pi-h(x) {\quad \text on\: \:} [0,\pi].
\end{equation}
By substituting $x=\pi/2$ in the equation \eqref{2.7},
we get
\begin{equation}\label{2.8}
h(\pi/2)=\pi/2.
\end{equation}

Then, the function  $m(r):=a\sin h(r),$ where $a:=1/{h'(0)},$
gives a Riemannian metric $dr^2+m(r)^2d\theta^2$ of a 2-sphere of revolution on the unit sphere ${\Sph^2}. $ 
This surface has a reflective symmetry with respect to the equator $r=\pi/2,$
since  $m(\pi-r)=m(r) $ holds on $[0,\pi] $ and  $m$ is extendable to a smooth odd function.

\begin{proposition}\label{prop2.5}
If $h''(x)> 0$ (respectively $h''(x)\geq 0)$ on $ (0,\pi/2),$ then the cut locus of a point $q\in r^{-1}(0,\pi)$ of the 2-sphere of revolution $(\Sph^2,dr^2+m(r)^2d\theta^2)$, where $m(r)=a\sin h(r),a=1/h'(0)$ is   a subarc (respectively a single point or a subarc) of the antipodal parallel $r=\pi-r(q).$
\end{proposition}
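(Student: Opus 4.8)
The plan is to reduce the geometric statement to a monotonicity property of the half period function and then to quote the structure theory of \cite{BCST} exactly as in the Remark following Lemma \ref{lem2.3}. Concretely, I would show that the half period function $\varphi_m$ associated with $m(r)=a\sin h(r)$ is strictly decreasing when $h''>0$ and non-increasing when $h''\geq 0$; the conclusion about the cut locus then follows from \cite[Lemma 3.3, Theorem 3.5]{BCST}, with strict monotonicity forcing a genuine subarc and mere monotonicity allowing the degenerate case of a single point.

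First I would verify that $m=a\sin h$ is an admissible choice of $f$ in the definition \eqref{2.1}. Since $h$ is odd we have $h(0)=0$, so $m(0)=0$ and $m'(0)=a\,h'(0)=1$; moreover $m'(r)=a\,h'(r)\cos h(r)>0$ on $(0,\pi/2)$ because $h'>0$ there and $h(r)\in(0,\pi/2)$, while $m'(\pi/2)=a\,h'(\pi/2)\cos(\pi/2)=0$. I would also note that the constant $a:=f(\pi/2)$ appearing in Lemma \ref{lem2.1} coincides with our $a$, since $m(\pi/2)=a\sin h(\pi/2)=a$ by \eqref{2.8}.

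The decisive step is the computation of the function $A$ of Lemma \ref{lem2.1} for $f=m$. Using $a^2-m(r)^2=a^2\cos^2 h(r)$ with $\cos h(r)>0$ on $[0,\pi/2)$, together with $m'(r)=a\,h'(r)\cos h(r)$, one arrives at the clean identity
\begin{equation*}
A(r)=\frac{\sqrt{a^2-m(r)^2}}{m'(r)}=\frac{a\cos h(r)}{a\,h'(r)\cos h(r)}=\frac{1}{h'(r)}.
\end{equation*}
Thus $A=1/h'$, and its monotonicity is governed entirely by the sign of $h''$: since $h'>0$, the hypothesis $h''>0$ makes $h'$ strictly increasing and hence $A$ strictly decreasing, while $h''\geq 0$ makes $A$ non-increasing. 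Applying Lemma \ref{lem2.1} then gives that $\varphi_m$ is strictly decreasing (respectively decreasing) on $(0,a)$.

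I do not expect a serious obstacle here: once the identity $A=1/h'$ is observed the argument is essentially forced, and the only points demanding care are the admissibility of $m$ (handled by the reflective symmetry $m(\pi-r)=m(r)$ and the oddness of $h$) and the faithful translation, through \cite[Lemma 3.3, Theorem 3.5]{BCST}, of ``strictly decreasing'' into ``subarc'' versus ``decreasing'' into ``single point or subarc''. This dichotomy is precisely what separates the two cases of the proposition, and it is the only place where I would appeal to the finer structure theory rather than to the elementary computation above.
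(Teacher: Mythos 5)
Your proposal is correct and follows essentially the same route as the paper: the identity $\sqrt{a^2-m^2}/m'=1/h'$, Lemma \ref{lem2.1} to get the (strict) monotonicity of the half period function from the sign of $h''$, and then the citation of \cite[Lemma 3.3, Theorem 3.5]{BCST}. The additional admissibility checks you include are sound but are exactly what the paper leaves implicit as ``clear.''
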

\begin{proof}
It is clear that 
$\sqrt{a^2-m^2(r)}/m'(r)=1/h'(r)$ on $(0,\pi/2).$  
Hence, by Lemma \ref{lem2.1}, the half period function
$\varphi_m(\nu)$ with respect to $m$ is strictly decreasing (respectively decreasing) on $(0,a).$
From \cite [Lemma 3.3, Theorem 3.5]{BCST},
our conclusion is clear.
$\qedd$
\end{proof}

%%%%%%%%%%%%%%%%%%%%%%%%%%%%%%%%%%%%
%%%GAUSSIAN CURVATURE
%%%%%%%%%%%%%%%%%%%%%%%%
It is well-known that 
the Gaussian curvature $G(q)$ at  a point $q\in r^{-1}(0,\pi)$ of the sphere $(\Sph^2,dr^2+m(r)^2d\theta^2)$ equals $(-m''/m)(r(q)).$  
Since $m'(r)=a\cos h(r)\cdot h'(r)$ and $m''(r)=-m(r)\cdot h'(r)^2+
a\cos h(r)\cdot h''(r),$ we get
$$G(q)=(-m''/m)(r(q))=(h'(r(q)))^2-\cot h(r(q))\cdot h''(r(q)).$$
If we define a function $\tilde G$ defined on $[0,\pi]$ by
$\tilde G(r(q)):=G(q),$
we get
\begin{equation}\label{2.9}
\tilde G(x)=h'(x)^2-\cot h(x)\cdot h''(x)
\end{equation}
on $[0,\pi].$
%%%%%%%%%%%%%%%%%%%%%%%%%%%%%%%%%%%

Choose a smooth odd function $h_0(x)$ satisfying the properties 
\eqref{2.6}, \eqref{2.7}, and
\begin{equation}\label{2.10}
h_0''(x)>0\quad {\text on}  \quad\:\: (0,\pi/2).
\end{equation}
By Proposition \ref{prop2.5}, the cut locus of  a point distinct from the poles on the 2-sphere of revolution
$(\Sph^2,dr^2+m_0(r)^2d\theta^2),$ where $m_0(r)=a\sin h_0(r),a=1/h_0'(0),$ 
is a subset of the  antipodal parallel of the point. 

\begin{example}\label{ex2.6}
The odd function $h_0(x):=x-\alpha \sin 2x$ satisfies the properties 
\eqref{2.6}, \eqref{2.7}, and \eqref{2.10} for each constant $\alpha \in(0,1/2).$
\end{example}

Let $\{R_n(x)\}_n$ denote a family of smooth odd functions on $[0,\pi]$
such that for each positive integer $n$
\begin{equation}\label{2.11}
R_n{}'(0)=0,
\end{equation}
\begin{equation}\label{2.12}
R_n(\pi-x)=-R_n(x)\quad {\text on\quad } [0,\pi],
\end{equation}
\begin{equation}\label{2.13}
\limsup_{n\to \infty} \:\:\sup_{x\in(0,\pi/2)}|R_n'(x)|/h'_0(x)<1,
\end{equation}
and 
\begin{equation}\label{2.14}
\limsup_{n\to \infty} \:\:\sup_{x\in(0,\pi/2)}|R_n''(x)|/h''_0(x)<1.
\end{equation}

By making use of the family $\{R_n(x)\}$ of smooth odd functions,
we introduce  a family of 2-spheres of revolution $\{(\Sph^2,dr^2+m_n(r)^2d\theta^2)
\}_n,$ where
\begin{equation} \label{2.15}
m_n(r):=a\cdot{\sin h_n(r)},  {\quad} 
a=1/h_0{}'(0)=1/h_n{}'(0), \quad h_n(r):=h_0(r)+R_n(r).
\end{equation}

Since the function $h_0$ satisfies \eqref{2.6}, \eqref{2.7}, 
and \eqref{2.10},  it follows from \eqref{2.11},\dots,\eqref{2.14} that there exists a constant $N_0>0$ such that for each $n>N_0, $ the smooth odd function  $h_n$ also satisfies  \eqref{2.6}, \eqref{2.7}, and \eqref{2.10}. Hence, by Proposition \ref{prop2.5}, we obtain:
\begin{proposition}\label{prop2.7}
For each $n>N_0,$
the function $m_n(r)$ gives a smooth Riemannian metric of  a 2-sphere of revolution
on the unit 2-sphere. The cut locus of a point $q\in r^{-1}(0,\pi)$ of the 2-sphere 
 $(\Sph^2,dr^2+m_n(r)^2d\theta^2)$ of revolution is  a subarc of the antipodal parallel of $q.$
\end{proposition}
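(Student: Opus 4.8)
The plan is to reduce the statement entirely to Proposition~\ref{prop2.5} by checking that, for all sufficiently large $n$, the perturbed function $h_n=h_0+R_n$ still satisfies the three structural conditions \eqref{2.6}, \eqref{2.7}, and \eqref{2.10} required there. Once this is established, Proposition~\ref{prop2.5} applies verbatim to $h=h_n$ and yields the claim about the cut locus.

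First I would dispose of the symmetry condition \eqref{2.7}, which holds for \emph{every} $n$ and costs nothing. Combining $h_0(\pi-x)=\pi-h_0(x)$ with \eqref{2.12} gives
\[
h_n(\pi-x)=h_0(\pi-x)+R_n(\pi-x)=\bigl(\pi-h_0(x)\bigr)-R_n(x)=\pi-h_n(x).
\]
Moreover \eqref{2.11} gives $h_n{}'(0)=h_0{}'(0)$, so the normalization $a=1/h_n{}'(0)=1/h_0{}'(0)$ in \eqref{2.15} is consistent, and, $h_n$ being a sum of smooth odd functions, $m_n(r)=a\sin h_n(r)$ is itself smooth and odd and satisfies $m_n(0)=0$, $m_n{}'(0)=1$, and $m_n(\pi-r)=m_n(r)$. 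Hence $m_n$ indeed defines a $2$-sphere of revolution in the sense of the Introduction, which already settles the first assertion of the proposition.

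The heart of the argument is the extraction of the threshold $N_0$ guaranteeing that the two sign conditions survive the perturbation. From \eqref{2.13} I would fix a constant $\delta<1$ and an index $N_1$ such that $|R_n{}'(x)|\le \delta\, h_0{}'(x)$ for all $x\in(0,\pi/2)$ and all $n>N_1$; then
\[
h_n{}'(x)=h_0{}'(x)+R_n{}'(x)\ge (1-\delta)\,h_0{}'(x)>0\quad\text{on }(0,\pi/2),
\]
while $h_n{}'(0)=h_0{}'(0)>0$, so \eqref{2.6} holds for $h_n$. Symmetrically, from \eqref{2.14} I would fix $\eta<1$ and $N_2$ with $|R_n{}''(x)|\le \eta\, h_0{}''(x)$, whence $h_n{}''(x)\ge(1-\eta)\,h_0{}''(x)>0$ on $(0,\pi/2)$ by \eqref{2.10}, giving \eqref{2.10} for $h_n$. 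Setting $N_0:=\max\{N_1,N_2\}$, the function $h_n$ satisfies \eqref{2.6}, \eqref{2.7}, and \eqref{2.10} for every $n>N_0$, and Proposition~\ref{prop2.5} (in its strict form $h''>0$) then tells us that the cut locus of each $q\in r^{-1}(0,\pi)$ is a subarc of the antipodal parallel $r=\pi-r(q)$.

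The only genuine obstacle is this passage from the two $\limsup$ hypotheses to a single uniform threshold $N_0$; but it is the routine observation that $\limsup_{n\to\infty}\sup_{x}(\,\cdot\,)<1$ forces the quantity to lie below a fixed constant strictly less than $1$ for all large $n$, uniformly in $x\in(0,\pi/2)$. Since the unperturbed derivatives $h_0{}'$ and $h_0{}''$ are strictly positive there, they dominate the perturbation terms $R_n{}'$ and $R_n{}''$ once $n>N_0$, and no further curvature computation is needed for this proposition.
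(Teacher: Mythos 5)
Your proposal is correct and follows essentially the same route as the paper: the paper's justification is precisely the observation that \eqref{2.11}--\eqref{2.14}, together with the strict positivity of $h_0'$ and $h_0''$ coming from \eqref{2.6} and \eqref{2.10}, force $h_n$ to satisfy \eqref{2.6}, \eqref{2.7}, and \eqref{2.10} for all $n$ beyond some threshold $N_0$, after which Proposition~\ref{prop2.5} applies. Your write-up merely makes explicit the passage from the $\limsup$ hypotheses to a uniform bound, which the paper leaves implicit.
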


By the equation \eqref{2.9},
the Gaussian curvature $\widetilde G_n(r(q))$ at  a point $q\in r^{-1}(0,\pi)$ of the 2-sphere 
$(\Sph^2,dr^2+m_n(r)^2d\theta^2)$ of revolution equals 
\begin{equation}\label{2.16}
\widetilde G_n(r(q))=h_n'(r(q))^2-\cot h_n(r(q))\cdot h_n''(r(q)).
\end{equation}

\begin{lemma}\label{lem2.8}
For each positive integer  $n,$
$|\sin nx|\leq n|\sin x|$ holds for all $x.$
\end{lemma}
\begin{proof}
The proof is clear by induction. In fact,
suppose that $|\sin nx|\leq n|\sin x|$ holds for some positive integer
$n=n_0.$ From the sine addition formula, it follows that
$\sin(n_0+1)x= \sin n_0 x\cdot\cos x+\sin x\cdot\cos n_0x.$ 
Thus, by the triangle inequality,
we get
$|\sin(n_0+1)x|\leq |\sin n_0x|+|\sin x|\leq (n_0+1)|\sin x|.$

\end{proof}
$\qedd$

Here, 
we introduce 
 a family $\{R_n^B(x)\}_n$ of smooth odd functions defined by
$$R_n^B(x):=B(x)\sin 2n^2x/n^5,$$
where 
$B$ denotes a smooth even function on $[0,\pi]$
satisfying
\begin{equation}\label{2.17}
B(\pi-x)=B(x)  \quad {\text on} \quad [0,\pi],
\end{equation}
and
\begin{equation}\label{2.18}
B(0)=B(\pi/2)=0.
\end{equation}

\begin{lemma}\label{lem2.9}
$\sup_{x\in(0,\pi/2)} |B(x)|/\sin 2x<+\infty$ 
and 
$\sup_{x\in(0,\pi/2)}|B'(x)|/\sin 2x<+\infty.$
\end{lemma}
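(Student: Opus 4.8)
The plan is to reduce the boundedness claim to the behavior of the two quotients near the endpoints $x=0$ and $x=\pi/2$, which are the only zeros of $\sin 2x$ on $[0,\pi/2]$. On any compact subinterval $[\delta,\pi/2-\delta]\subset(0,\pi/2)$ the denominator satisfies $\sin 2x\geq\sin 2\delta>0$, while $B$ and $B'$ are continuous, hence both $|B|/\sin 2x$ and $|B'|/\sin 2x$ are bounded there. So the whole problem is to verify that each quotient stays bounded as $x\to 0^+$ and as $x\to(\pi/2)^-$.

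First I would record the vanishing of the relevant derivatives at the two endpoints. Since $B$ is a smooth even function, its odd-order derivatives vanish at the origin; in particular $B'(0)=0$. Differentiating the symmetry relation \eqref{2.17}, namely $B(\pi-x)=B(x)$, gives $-B'(\pi-x)=B'(x)$, and substituting $x=\pi/2$ yields $B'(\pi/2)=0$. Together with the hypotheses $B(0)=B(\pi/2)=0$ from \eqref{2.18}, this shows that both $B$ and $B'$ vanish at each of the two endpoints.

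With these vanishings in hand I would compute the one-sided limits by L'H\^opital's rule (or equivalently by Taylor expansion), using $\sin 2x\to 0$ together with $(\sin 2x)'=2\cos 2x$ and $\cos 0=1$, $\cos\pi=-1$. At $x=0$ one finds $\lim B(x)/\sin 2x=B'(0)/2$ and $\lim B'(x)/\sin 2x=B''(0)/2$; at $x=\pi/2$ one finds $\lim B(x)/\sin 2x=-B'(\pi/2)/2$ and $\lim B'(x)/\sin 2x=-B''(\pi/2)/2$. All four limits are finite. Consequently each of $|B|/\sin 2x$ and $|B'|/\sin 2x$ extends continuously to the closed interval $[0,\pi/2]$ and is therefore bounded, which gives the two finite suprema.

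There is no real obstacle here beyond bookkeeping: the content lies entirely in extracting $B'(\pi/2)=0$ from the symmetry \eqref{2.17} and $B'(0)=0$ from evenness, after which the simple zeros of the denominator are exactly matched by (simple or higher-order) zeros of the numerators. The smoothness of $B$ guarantees that these L'H\^opital limits genuinely exist rather than being merely formal.
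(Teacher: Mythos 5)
Your proposal is correct and follows essentially the same route as the paper: the key content in both is extracting $B'(0)=0$ from evenness and $B'(\pi/2)=0$ from differentiating the symmetry relation \eqref{2.17}, after which the vanishing of $B$ and $B'$ at the endpoints (where $\sin 2x$ has simple zeros) gives the finite suprema. The paper leaves the final boundedness as ``clear,'' whereas you supply the L'H\^opital/Taylor details; this is just a more explicit write-up of the same argument.
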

\begin{proof}
From the equation \eqref{2.18}, it is trivial that 
$\sup_{x\in(0,\pi/2)} |B(x)|/\sin 2x<+\infty.$
Since the function $B$ is even,
we have
\begin{equation}\label{2.19}
B'(0)=0,
\end{equation}
and by differentiating the equation \eqref{2.17},
we obtain
$B'(\pi-x)=-B'(x).$  Hence, by substituting $x=\pi/2,$
we obtain
\begin{equation}\label {2.20}
B'(\pi/2)=0.
\end{equation}
By \eqref{2.19} and \eqref{2.20}, it is clear that $\sup_{x\in(0,\pi/2)}|B'(x)|/\sin 2x<+\infty.$

$\qedd$
\end{proof}

%%%%%%%%%%%%%%%%%%%%%%%%%%%%%%%%%%%%%%%%%%%%%%%
\begin{lemma}\label{lem2.10}
There exists a number $N_0>0$ such that
for any $n>N_0,$ and any $x\in [0,\pi]$
$|h_n^B{}'(x)|\leq 2, |h_n^B{}''(x)|\leq 2,$
where
$h_n^B(x):=x-\alpha\sin 2x +R_n^B(x),$ and $\alpha\in(0,1/2)$ is a constant.
\end{lemma}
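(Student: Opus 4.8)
The plan is to differentiate $h_n^B$ explicitly and separate the $n$-independent ``leading'' part from the rapidly oscillating remainder, then show the remainder decays uniformly in $x$ as $n\to\infty$. Writing $R_n^B(x)=B(x)\sin(2n^2x)/n^5$, a direct computation gives
$$h_n^B{}'(x)=1-2\alpha\cos 2x+\frac{B'(x)\sin(2n^2x)}{n^5}+\frac{2B(x)\cos(2n^2x)}{n^3},$$
and differentiating once more,
$$h_n^B{}''(x)=4\alpha\sin 2x+\frac{B''(x)\sin(2n^2x)}{n^5}+\frac{4B'(x)\cos(2n^2x)}{n^3}-\frac{4B(x)\sin(2n^2x)}{n}.$$
In each case the first summand carries no dependence on $n$ and the remaining summands constitute the oscillatory remainder.

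Next I would control the leading parts. Since $\alpha\in(0,1/2)$, one has $|1-2\alpha\cos 2x|\le 1+2\alpha<2$ and $|4\alpha\sin 2x|\le 4\alpha<2$ for every $x$, so both leading parts are bounded, \emph{uniformly in $x$}, by a constant strictly below $2$. The gaps $1-2\alpha>0$ and $2-4\alpha>0$ are exactly the room left to absorb the remainder.

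For the remainder I would use that $B$ is smooth on the compact interval $[0,\pi]$, so $M:=\max\{\sup_{[0,\pi]}|B|,\ \sup_{[0,\pi]}|B'|,\ \sup_{[0,\pi]}|B''|\}<\infty$. Each oscillatory term is a product of a trigonometric factor of modulus at most $1$, one of $B,B',B''$, and an explicit power of $n$; bounding the trig factors by $1$ and using $n\ge 1$ gives, uniformly in $x$, $|h_n^B{}'(x)-(1-2\alpha\cos 2x)|\le 3M/n$ and $|h_n^B{}''(x)-4\alpha\sin 2x|\le 9M/n$. It then suffices to choose $N_0>\max\{\,3M/(1-2\alpha),\ 9M/(2-4\alpha)\,\}$, so that for all $n>N_0$ the remainder is smaller than the available gap, whence $|h_n^B{}'(x)|\le 1+2\alpha+ (1-2\alpha)=2$ and $|h_n^B{}''(x)|\le 4\alpha+(2-4\alpha)=2$ for every $x\in[0,\pi]$.

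The only delicate point is the bookkeeping of the powers of $n$ in the second derivative. Each differentiation of $\sin(2n^2x)$ or $\cos(2n^2x)$ produces a factor $2n^2$, and at most two such differentiations occur, so the largest power of $n$ introduced is $n^4$; against the prefactor $n^{-5}$ built into $R_n^B$, this still leaves the worst term, namely $-4B(x)\sin(2n^2x)/n$, decaying like $n^{-1}$. Thus the choice of exponent $5$ together with the quadratic frequency $2n^2$ is precisely what guarantees that even the most dangerous term vanishes in the limit, and verifying this is the main (and essentially the only) obstacle; the rest follows from compactness of $[0,\pi]$ and $\alpha<1/2$.
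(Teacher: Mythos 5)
Your proof is correct and follows essentially the same route as the paper: explicit differentiation of $R_n^B$, the bounds $|1-2\alpha\cos 2x|\le 1+2\alpha<2$ and $|4\alpha\sin 2x|\le 4\alpha<2$, and the observation that the oscillatory remainder is $O(1/n)$ uniformly on $[0,\pi]$. You are merely more explicit than the paper in quantifying the choice of $N_0$.
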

\begin{proof}

It is easy to check that
\begin{equation}\label{2.21}
(B(x)\cdot\sin 2n^2x)'=B'(x)\cdot\sin 2n^2 x+2n^2B(x)\cdot\cos2n^2 x
\end{equation}
and
\begin{equation}\label{2.22}
(B(x)\cdot\sin 2n^2x)''=(B''(x)-4n^4B(x))\cdot\sin 2n^2 x+4n^2B'(x)\cdot\cos 2n^2 x.
\end{equation}
Therefore, we get
\begin{equation}\label{2.23}
|R_n^B{}'(x)|\leq(|B'(x)|+2n^2|B(x)|)/n^5
\end{equation}
\begin{equation}\label{2.24}
|R_n^B{}''(x)|\leq(|B''(x)\cdot\sin 2n^2x|+4n^4|B(x)|+4n^2|B'(x)|)/n^5.
\end{equation}
From the equations above, it is clear that $\lim_{n\to \infty}\sup_{x\in[0,\pi]}|R_n^B{}'(x)|=\lim_{n\to \infty}\sup_{x\in[0,\pi]}|R_n^B{}''(x)|=0.$
Since $|(x-\alpha\sin 2x)'|=|1-2\alpha\cos 2x|\leq 1+2\alpha<2, $ 
and $|(x-\alpha\sin 2x)''|=|4\alpha\sin 2x|\leq 4\alpha<2,$
the existence of the number $N_0$ is established.
$\qedd$
\end{proof}

%%%%%%%%%%%%%%%%%%%%%%%%%%%%%%%%%%%%%%

\begin{lemma}\label{lem2.11}
For each $n,$ the function $R_n^B(x)$
satisfies \eqref{2.11},...,\eqref{2.14} for $h_0(x)=x-\alpha\sin 2x,\alpha\in(0,1/2).$ 
\end{lemma}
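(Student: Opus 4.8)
The plan is to verify the four defining properties \eqref{2.11}--\eqref{2.14} in turn for $R_n^B(x)=B(x)\sin 2n^2x/n^5$ with $h_0(x)=x-\alpha\sin 2x$, leaning on the estimates already recorded in Lemmas \ref{lem2.8}, \ref{lem2.9} and \ref{lem2.10}. The first two are algebraic. For \eqref{2.11}, differentiating by \eqref{2.21} and evaluating at $x=0$ gives $R_n^B{}'(0)=(B'(0)\sin 0+2n^2B(0))/n^5=0$, since $B(0)=0$ by \eqref{2.18}. For \eqref{2.12}, I would combine $B(\pi-x)=B(x)$ from \eqref{2.17} with the fact that $2n^2$ is an even integer, whence $\sin(2n^2(\pi-x))=-\sin(2n^2x)$; the product of these two identities yields $R_n^B(\pi-x)=-R_n^B(x)$.

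The two limsup conditions rest on the curvature-type quotients, for which I would first note $h_0'(x)=1-2\alpha\cos 2x\ge 1-2\alpha>0$ and $h_0''(x)=4\alpha\sin 2x\ge 0$ on $(0,\pi/2)$. Condition \eqref{2.13} is then immediate: since $h_0'$ is bounded below by the positive constant $1-2\alpha$, dividing the uniform bound $|R_n^B{}'(x)|\le(|B'(x)|+2n^2|B(x)|)/n^5$ from \eqref{2.23} by $1-2\alpha$ and using the boundedness of $B$ and $B'$ on $[0,\pi]$ shows $\sup_{(0,\pi/2)}|R_n^B{}'|/h_0'=O(n^{-3})\to 0$, so the limsup is $0<1$.

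The real obstacle is \eqref{2.14}, because the denominator $h_0''(x)=4\alpha\sin 2x$ vanishes at both endpoints $x=0$ and $x=\pi/2$, so no uniform lower bound is available and a crude estimate would diverge near the poles and the equator. The idea is to cancel the degeneracy term by term: starting from \eqref{2.24} I would write
$$\frac{|R_n^B{}''(x)|}{h_0''(x)}\le\frac{1}{4\alpha}\left(\frac{|B''(x)|\,|\sin 2n^2x|}{n^5\sin 2x}+\frac{4n^4|B(x)|}{n^5\sin 2x}+\frac{4n^2|B'(x)|}{n^5\sin 2x}\right),$$
and treat each summand separately. For the first I would invoke Lemma \ref{lem2.8} in the form $|\sin 2n^2x|=|\sin(n^2\cdot 2x)|\le n^2\sin 2x$ on $(0,\pi/2)$, which cancels the singular factor and leaves a term of order $n^{-3}$; for the second and third I would invoke Lemma \ref{lem2.9}, which makes $\sup_{(0,\pi/2)}|B|/\sin 2x$ and $\sup_{(0,\pi/2)}|B'|/\sin 2x$ finite, leaving contributions of order $n^{-1}$ and $n^{-3}$. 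Consequently $\sup_{(0,\pi/2)}|R_n^B{}''|/h_0''\to 0$, and \eqref{2.14} holds with limsup $0<1$. The crux is thus the interplay between Lemma \ref{lem2.8}, which tames the high-frequency oscillation $\sin 2n^2x$ against $\sin 2x$, and the endpoint vanishing of $B$ encoded in Lemma \ref{lem2.9}; together they absorb the degeneracy of $h_0''$ at the poles and the equator.
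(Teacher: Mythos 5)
Your proof is correct and follows essentially the same route as the paper: properties \eqref{2.11} and \eqref{2.12} from \eqref{2.17}--\eqref{2.18}, the bound \eqref{2.23} together with $h_0'\geq 1-2\alpha>0$ for \eqref{2.13}, and for \eqref{2.14} the same decomposition of \eqref{2.24} with Lemma \ref{lem2.8} taming $|\sin 2n^2x|/\sin 2x$ and Lemma \ref{lem2.9} handling the $|B|/\sin 2x$ and $|B'|/\sin 2x$ terms. Your write-up is in fact slightly more explicit than the paper's (which leaves the first two properties and the endpoint verification as ``clear''), but the mathematical content is identical.
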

\begin{proof}
From \eqref{2.17} and \eqref{2.18}, it is clear that the function $R_n^B$ satisfies
\eqref{2.11} and \eqref{2.12}.
Applying Lemma \ref{lem2.8} to the function $\sin 2n^2x,$
we obtain
$|\sin 2n^2x|\leq n^2|\sin 2x|.$
Therefore,  by  \eqref{2.24},
$$ |R_n^B{}''(x)|/\sin 2x\leq \left(n^2|B''(x)|+4n^4|B|/\sin 2x+4n^2|B'(x)|/\sin 2x\right)/n^5$$
hold for all $x\in(0,\pi/2).$
Since $h_0{}'(x)\geq 1-2\alpha>0,$ and $h_0''(x)=4\alpha\sin 2x,$  we get, by \eqref{2.23} and Lemma \ref{lem2.9},
$$\lim_{n\to \infty } \sup_{x\in(0,\pi/2)} |R_n^B{}'(x)|/h_0'(x)=0\quad {\rm and}\quad
 \lim_{n\to \infty } \sup_{x\in(0,\pi/2)} |R_n^B{}''(x)|/h_0''(x)=0.$$
These equations imply that the family  $\{R_n^B(x)\}_n$ satisfies 
the property \eqref{2.13} and \eqref{2.14} for $h_0(x)=x-\alpha\sin 2x.$
$\qedd$
\end{proof}
Hence, 
by Proposition \ref{prop2.7} and Lemma \ref{lem2.11}, we obtain:
\begin{proposition}\label{prop2.12}
For any sufficiently large $n$, the cut locus of a point $q\in r^{-1}(0,\pi)$ of the 
2-sphere of revolution $(\Sph^2,dr^2+m_n(r)^2d\theta^2),$ where $m_n(r)= (1-2\alpha)^{-1} \sin (r-\alpha\sin 2r+R_n^B(r)),$ is a  subarc of the antipodal parallel  of $q.$  
\end{proposition}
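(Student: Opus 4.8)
The plan is to recognize this statement as a direct specialization of the general machinery already assembled, obtained by feeding concrete choices of the base function and the perturbation into Proposition \ref{prop2.7}. First I would fix $h_0(x)=x-\alpha\sin 2x$ with $\alpha\in(0,1/2)$; by Example \ref{ex2.6} this odd function satisfies \eqref{2.6}, \eqref{2.7}, and \eqref{2.10}, so it is an admissible base function for the construction in \eqref{2.15}. Computing $h_0'(x)=1-2\alpha\cos 2x$ gives $h_0'(0)=1-2\alpha$, so the normalizing constant is $a=1/h_0'(0)=(1-2\alpha)^{-1}$, matching the coefficient in the stated metric.

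Next I would take the perturbation family to be $R_n=R_n^B$. The one genuine verification step is to confirm that this concrete family fulfills the four abstract hypotheses \eqref{2.11}--\eqref{2.14} demanded of $\{R_n\}_n$ in the general setup, but this is precisely the content of Lemma \ref{lem2.11}, which was proved for exactly this $h_0$. With $h_n=h_0+R_n^B$, the function $m_n(r)=a\sin h_n(r)$ becomes $m_n(r)=(1-2\alpha)^{-1}\sin\bigl(r-\alpha\sin 2r+R_n^B(r)\bigr)$, i.e.\ the metric appearing in the statement.

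Finally I would invoke Proposition \ref{prop2.7} verbatim: since $\{R_n^B\}_n$ satisfies \eqref{2.11}--\eqref{2.14}, there is a threshold $N_0$ such that for every $n>N_0$ the perturbed function $h_n$ still satisfies \eqref{2.6}, \eqref{2.7}, and \eqref{2.10}, whence $m_n$ defines a smooth $2$-sphere of revolution and, through Proposition \ref{prop2.5} and Lemma \ref{lem2.1}, its half period function is strictly decreasing. Consequently the cut locus of any $q\in r^{-1}(0,\pi)$ is a subarc of the antipodal parallel $r=\pi-r(q)$. Substituting the explicit $m_n$ yields the claim for all sufficiently large $n$.

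I do not expect any real obstacle here: the analytic heavy lifting—the monotonicity criterion of Lemma \ref{lem2.1}, the passage from that criterion to cut-locus structure in Proposition \ref{prop2.5}, and the uniform smallness estimates of Lemma \ref{lem2.11}—has already been carried out, so this proposition is essentially a bookkeeping corollary. The only point requiring care is ensuring that the $h_0$ chosen here is the very one for which Lemma \ref{lem2.11} was stated, so that the hypotheses of Proposition \ref{prop2.7} apply with no gap; once that alignment is checked, the conclusion follows immediately.
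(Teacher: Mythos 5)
Your proposal is correct and follows exactly the paper's own route: the paper derives Proposition \ref{prop2.12} by combining Proposition \ref{prop2.7} with Lemma \ref{lem2.11}, precisely the specialization $h_0(x)=x-\alpha\sin 2x$, $R_n=R_n^B$, $a=(1-2\alpha)^{-1}$ that you describe. Your write-up is, if anything, more explicit than the paper's one-line deduction, but the content is identical.
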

%%%%%%%%%%%%%%%%%%

%%%%%%%%%%%%%%%%%%%%%%%%%%%%%%%%%%%%%
%%%%

\section{Latitudinally  corrugated  2-spheres }%sec3
In this section,   Theorem A  will be proven more generally, so that we get the theorem as a corollary.
Let $\widetilde G_n(r(q))$ denote the Gaussian curvature at a point  $q\in r^{-1}(0,\pi/2)$ of the 2-sphere of revolution $(\Sph^2, dr^2+m_n(r)^2d\theta^2),$ where $m_n(r)=(1-2\alpha)^{-1}\sin\left(r-\alpha\sin 2r+R_n^B(r)\right), $
$R_n^B(r)=B(r)\sin 2n^2r/n^5,$ and $\alpha\in(0,1/2)$ is a constant.

\begin{lemma}\label{lem3.1}
For each $t\in(0,\pi/2),$
\begin{equation}\label{3.1}
2\widetilde G_n{}'(x)\cdot\sin^2h_n^B(x)=2(2-\cos 2h_n^B(x))h_n^B{}'(x)\cdot h_n^B{}''(x)-(\sin 2h_n^B(x))\cdot h_n^B{}'''(x)
\end{equation}
holds,
where $h_n^B(x):=x-\alpha\sin 2x +R_n^B(x).$ 
\end{lemma}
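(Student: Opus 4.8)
The plan is to obtain \eqref{3.1} by differentiating the curvature formula \eqref{2.16} once in $x$ and then clearing the cotangent with a single pair of double-angle rewrites. Throughout I abbreviate $h:=h_n^B$. In the setting of this section we have $h_n=h_0+R_n^B$ with $h_0(x)=x-\alpha\sin 2x$ the function of Example~\ref{ex2.6}, so $h_n=h_n^B$ and \eqref{2.16} reads
\begin{equation*}
\widetilde G_n(x)=h'(x)^2-\cot h(x)\cdot h''(x).
\end{equation*}

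First I would differentiate this expression. The term $h'^2$ contributes $2h'h''$, and for the product $\cot h\cdot h''$ I use $\tfrac{d}{dx}\cot h=-h'/\sin^2 h$ together with the product rule, which gives
\begin{equation*}
\widetilde G_n{}'(x)=2h'h''+\frac{h'h''}{\sin^2 h}-\cot h\cdot h'''.
\end{equation*}
Next I would multiply through by $\sin^2 h$ so that every denominator is removed at once, yielding
\begin{equation*}
\widetilde G_n{}'(x)\sin^2 h=2h'h''\sin^2 h+h'h''-\sin h\cos h\cdot h'''.
\end{equation*}

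Finally I would multiply by $2$ and apply the two identities $2\sin^2 h=1-\cos 2h$ and $2\sin h\cos h=\sin 2h$. The term $2\cdot 2h'h''\sin^2 h$ becomes $2h'h''(1-\cos 2h)$, the term $2\sin h\cos h\cdot h'''$ becomes $\sin 2h\cdot h'''$, and combining $2h'h''(1-\cos 2h)+2h'h''=2h'h''(2-\cos 2h)$ produces exactly the right-hand side of \eqref{3.1}.

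There is no genuine obstacle here: \eqref{3.1} is a mechanical consequence of \eqref{2.16}, and the only point requiring care is the bookkeeping of the double-angle substitutions so that the coefficient $2-\cos 2h$ is assembled correctly rather than a stray $1-\cos 2h$. I would organise the computation so that all factors of $\sin^2 h$ are cleared in a single step, keeping the expression polynomial in $h',h'',h'''$ and the trigonometric functions of $h$ at every stage, which makes the collection into the stated form transparent.
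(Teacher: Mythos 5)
Your computation is correct and is essentially the paper's own proof: both differentiate \eqref{2.16} directly, multiply by $2\sin^2 h$, and apply the double-angle identities to arrive at \eqref{3.1}. The only cosmetic difference is that the paper writes the intermediate derivative as $h'h''(2+1/\sin^2 h)-\cot h\cdot h'''$ before clearing denominators, which is the same bookkeeping you perform.
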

\begin{proof}
By \eqref{2.16}, we have
$$\widetilde G_n(x)=h'(x)^2-\cot h(x)\cdot h''(x).$$
Here we set $h(x):=h_n^B(x)$ for simplicity. From a direct computation,
it follows that 
$$\widetilde G_n{}'(x)=h'(x)h''(x)( 2+1/\sin^2h(x))-\cot h(x)\cdot h'''(x).$$
Hence, we obtain
$$2\widetilde G_n{}'(x)\cdot\sin^2h(x)=2(2-\cos 2h(x))h'(x)h''(x)-\sin 2h(x)\cdot h'''(x).$$
$\qedd$
\end{proof}

\begin{lemma}\label{lem3.2}
For each number $\delta\in(0,\pi/3),$
there exist  positive numbers $c(\delta)$ and  $N_0(\delta,B),$ depending only on $\delta,$
and depending only on $\delta$ and the maximum of function $|B(x)|$ on $[0,\pi/2],$ respectively,  such that
$$\sin 2h_n^B(x)\geq c(\delta) $$
for all $n>N_0(\delta,B)$ and all $x\in[\delta,(\pi-\delta)/2].$
\end{lemma}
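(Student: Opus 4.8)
The plan is to exploit that the perturbation $R_n^B$ is uniformly small, while the unperturbed function $h_0(x):=x-\alpha\sin 2x$ already carries the interval $[\delta,(\pi-\delta)/2]$ strictly inside the open interval $(0,\pi/2)$ in a quantitative way; then $2h_n^B(x)$ will be trapped in a compact subinterval of $(0,\pi)$ on which $\sin$ is bounded away from $0$.

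First I would locate the image of $[\delta,(\pi-\delta)/2]$ under $h_0$. Since $h_0'(x)=1-2\alpha\cos 2x\geq 1-2\alpha>0$, the function $h_0$ is strictly increasing with $h_0(0)=0$ and $h_0(\pi/2)=\pi/2$. At the left endpoint, $\sin 2\delta\leq 2\delta$ gives $h_0(\delta)\geq(1-2\alpha)\delta$; at the right endpoint, integrating $h_0'\geq 1-2\alpha$ over $[(\pi-\delta)/2,\pi/2]$ (an interval of length $\delta/2$) gives $h_0((\pi-\delta)/2)\leq\pi/2-(1-2\alpha)\delta/2$. Setting $\eta:=(1-2\alpha)\delta/2>0$, monotonicity then yields $h_0(x)\in[\eta,\pi/2-\eta]$ for every $x\in[\delta,(\pi-\delta)/2]$. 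Note that the hypothesis $\delta<\pi/3$ is precisely what guarantees $\delta<(\pi-\delta)/2$, so that this interval is nondegenerate.

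Next I would absorb the perturbation. Since $|R_n^B(x)|=|B(x)|\,|\sin 2n^2x|/n^5\leq M_B/n^5$, where $M_B:=\max_{[0,\pi/2]}|B|$ (which equals $\max_{[0,\pi]}|B|$ by \eqref{2.17}), I would choose $N_0(\delta,B)$ so large that $M_B/n^5<\eta/2$ whenever $n>N_0(\delta,B)$; explicitly $N_0(\delta,B)=(4M_B/((1-2\alpha)\delta))^{1/5}$ works. Then $|h_n^B(x)-h_0(x)|<\eta/2$, so $h_n^B(x)\in[\eta/2,\pi/2-\eta/2]$ and hence $2h_n^B(x)\in[\eta,\pi-\eta]$. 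As $\sin$ is positive and concave on $(0,\pi)$, its minimum over $[\eta,\pi-\eta]$ is $\sin\eta$, giving $\sin 2h_n^B(x)\geq\sin\eta=:c(\delta)$. One then checks that $c(\delta)=\sin((1-2\alpha)\delta/2)$ depends only on $\delta$ (and the fixed constant $\alpha$), while the threshold $N_0(\delta,B)$ depends only on $\delta$ and $M_B$, exactly as asserted.

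The argument is entirely elementary; the only mildly delicate point is making the two endpoint estimates for $h_0$ quantitative, so that $\eta$ — and with it both $c(\delta)$ and the threshold on $n$ — can be written down explicitly in terms of $\delta$ and $\max|B|$ alone, rather than merely invoking compactness and continuity.
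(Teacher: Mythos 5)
Your proof is correct and follows essentially the same strategy as the paper: trap $h_0(x)=x-\alpha\sin 2x$ in a compact subinterval of $(0,\pi/2)$ quantitatively, then choose $n$ large enough that the perturbation $|R_n^B|\leq \max|B|/n^5$ cannot push $2h_n^B(x)$ out of a compact subinterval of $(0,\pi)$, where $\sin$ is bounded below. The only cosmetic difference is in the constants: the paper uses $\alpha<1/2$ to obtain the $\alpha$-free bounds $(2\delta-\sin 2\delta)/2\leq h_0(x)\leq(\pi-\delta)/2$, so its $c(\delta)$ literally depends only on $\delta$, whereas your $c(\delta)=\sin\bigl((1-2\alpha)\delta/2\bigr)$ carries the (fixed) constant $\alpha$ --- harmless here, but worth eliminating if you want to match the statement verbatim.
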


\begin{proof}
Since $h_0(x)=x-\alpha\sin 2x$ is increasing,
$$0<\delta-\alpha\sin 2\delta\leq h_0(x)\leq (\pi-\delta)/2-\alpha\sin\delta$$
for all $x\in[\delta,(\pi-\delta)/2].$
Since the positive number $\alpha$ is less than $1/2,$
we obtain 
$\delta-\alpha\sin 2\delta>(2\delta-\sin 2\delta)/2>0,$ and $(\pi-\delta)/2-\alpha\sin\delta<(\pi-\delta)/2.$
Hence, for all $x\in[\delta,(\pi-\delta)/2],$
\begin{equation}\label{3.2}
0<(2\delta-\sin 2\delta)/2\leq h_0(x)\leq (\pi-\delta)/2.
\end{equation}
We can choose a number $N_0(\delta,B)$ so as to satisfy
$\sup_{x\in[0,\pi/2]}|R_n^B(x)|<\epsilon_0(\delta), $ where
$\epsilon_0(\delta):=(2\delta-\sin2\delta)/8,$ 
  for all $n>N_0(\delta,B),$ since $R_N^B(x)=B(x)\sin 2n^2x/n^5.$
By the triangle inequality,
$$h_0(x)-|R_n^B(x)|\leq h_n^B(x)\leq h_0(x)+|R_n^B(x)|.$$
By \eqref{3.2},
for all 
$n>N_0(\delta,B)$ and all $x\in[\delta,(\pi-\delta)/2],$
$$ 6\epsilon_0(\delta)\leq 2h_n^B(x)\leq\pi-\delta+2\epsilon_0(\delta), $$
and hence
$$\sin 2h_n^B(x)\geq\min( \sin 6\epsilon_0(\delta), \sin(\delta-2\epsilon_0(\delta)) ).$$
This implies  
that the positive constant $c(\delta)$ is $\min( \sin 6\epsilon_0(\delta), \sin(\delta-2\epsilon_0(\delta)) ).$ 

$\qedd$
\end{proof}
\begin{lemma}\label{lem3.3}
For each positive integer $k\leq n^2,$
$$h_n^B{}'''(t_k^{(n)})=8\alpha\cos 2t_k^{(n)}+(-1)^k\left(6n^{-3}B''(t_k^{(n)})-8nB(t_k^{(n)})\right),$$
where
$t_k^{(n)}=k\pi/(2n^2)\in[0,\pi/2].$
\end{lemma}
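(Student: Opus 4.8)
The plan is to compute $h_n^B{}'''$ directly, exploiting the additive splitting $h_n^B(x) = (x-\alpha\sin 2x) + R_n^B(x)$ and postponing the evaluation at $t_k^{(n)}$ until the very end. The elementary summand $x-\alpha\sin 2x$ has successive derivatives $1-2\alpha\cos 2x$, then $4\alpha\sin 2x$, then $8\alpha\cos 2x$, so its third derivative contributes exactly the term $8\alpha\cos 2t_k^{(n)}$ of the claimed formula. Everything therefore reduces to evaluating the third derivative of $R_n^B(x) = B(x)\sin 2n^2 x / n^5$.

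For this I would differentiate \eqref{2.22} once more with respect to $x$ and sort the resulting terms according to whether they carry a factor $\sin 2n^2 x$ or $\cos 2n^2 x$. Applying the product rule to $(B''-4n^4 B)\sin 2n^2 x + 4n^2 B'\cos 2n^2 x$ gives
\begin{equation*}
(B\cdot\sin 2n^2 x)''' = (B'''-12n^4 B')\sin 2n^2 x + (6n^2 B''-8n^6 B)\cos 2n^2 x,
\end{equation*}
where the $\sin$-coefficient arises from $(B'''-4n^4 B') - 8n^4 B'$ and the $\cos$-coefficient from $2n^2(B''-4n^4 B) + 4n^2 B''$. This is pure bookkeeping and carries no genuine difficulty.

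The final step is the evaluation at $x = t_k^{(n)} = k\pi/(2n^2)$, which is where the choice of these particular points does all the work: since $2n^2 t_k^{(n)} = k\pi$, we have $\sin 2n^2 t_k^{(n)} = \sin k\pi = 0$ and $\cos 2n^2 t_k^{(n)} = \cos k\pi = (-1)^k$. Hence the entire $\sin$-coefficient vanishes and only the $\cos$-coefficient survives, multiplied by $(-1)^k$; dividing by $n^5$ yields $R_n^B{}'''(t_k^{(n)}) = (-1)^k\bigl(6n^{-3}B''(t_k^{(n)}) - 8n B(t_k^{(n)})\bigr)$. Adding the base contribution $8\alpha\cos 2t_k^{(n)}$ gives the asserted identity. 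There is no real obstacle beyond tracking signs and powers of $n$ accurately in the expansion; the one place deserving attention is the coefficient $-8n^6 B$, since after division it produces the term $-8nB(t_k^{(n)})$ that dominates as $n\to\infty$ and will presumably govern the sign changes of $\widetilde G_n{}'$ used later to count the curvature extrema.
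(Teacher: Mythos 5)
Your computation is correct and follows essentially the same route as the paper: differentiate \eqref{2.22} once more, then use $\sin 2n^2 t_k^{(n)}=0$ and $\cos 2n^2 t_k^{(n)}=(-1)^k$ so that only the $\cos$-coefficient $6n^2B''-8n^6B$ survives, and add the contribution $8\alpha\cos 2x$ from $x-\alpha\sin 2x$. The only cosmetic difference is that you assemble the full third derivative before evaluating, whereas the paper evaluates at $t_k^{(n)}$ term by term.
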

\begin{proof}
Since 
$\sin2n^2t_k^{(n)}=0,\cos 2n^2t_k^{(n)}=(-1)^k,$
it follows from the equation
\eqref{2.22} that 
$$(B(x)\sin 2n^2 x)'''\bigr|_{x=t_k^{(n)}}=2n^2(-1)^k\left(B''(t_k^{(n)})-4n^4B(t_k^{(n)})\right )+4n^2(-1)^kB''(t_k^{(n)})$$
holds.
Since $h_n^B{}'''(x)=8\alpha\cos 2x+(B(x)\sin 2n^2x)'''/n^5,$
we get
$h_n^B{}'''(t_k^{(n)})=8\alpha\cos 2t_k^{(n)}+(-1)^k\left(6n^{-3}B''(t_k^{(n)})-8nB(t_k^{(n)})\right).$

$\qedd$
\end{proof}
%%%%%%%%%%%%%%%%%%%%%%%%%%%%%%%%%%%%%%%%%%%%%
\begin{theorem}\label{th3.4}
Let $B(x)$ denote a smooth even function satisfying the properties   \eqref{2.17}, and \eqref{2.18}. Then, for any sufficiently large $n,$ the odd function 
\begin{equation}\label{3.3}
m_n(r):=(1-2\alpha)^{-1}\sin(r-\alpha\sin 2r+B(r)\sin 2n^2r/n^5 ),
\end{equation}
 where $\alpha\in(0,1/2)$ is a constant, gives a Riemannian metric $dr^2+m_n(r)^2d\theta^2$ of a 2-sphere of revolution  on the unit sphere $\Sph^2,$ and the cut locus  of each point 
$p\in r^{-1}(0,\pi)$ of the 2-sphere of revolution $(\Sph^2,dr^2+m_n(r)^2d\theta^2),$
is a subarc of the antipodal parallel $r=\pi-r(p)$ for all sufficiently large $n.$
Furthermore, if the function $B(x)$ is not identically zero on the open interval  $(0,\pi/2),$ then
the number of points on each meridian which assume a local maximum or minimum of the Gaussian curvature function on the meridian goes to infinity as $n$ tends to infinity.
\end{theorem}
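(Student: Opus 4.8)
The first assertion requires essentially no new work. By Proposition~\ref{prop2.12}, for all sufficiently large $n$ the function $m_n$ in \eqref{3.3} defines a smooth metric of a 2-sphere of revolution on $\Sph^2$, and the cut locus of each $p\in r^{-1}(0,\pi)$ is a subarc of the antipodal parallel $r=\pi-r(p)$. The substance of the theorem is the second assertion, and the plan is to produce, inside a fixed subinterval of $(0,\pi/2)$, a number of sign changes of $\widetilde G_n{}'$ that grows like $n^2$. The starting point is Lemma~\ref{lem3.1}, which expresses $2\widetilde G_n{}'(x)\sin^2 h_n^B(x)$ as
$$2(2-\cos 2h_n^B(x))h_n^B{}'(x)h_n^B{}''(x)-\sin 2h_n^B(x)\cdot h_n^B{}'''(x).$$
Since $\sin^2 h_n^B(x)>0$ on the relevant range, the sign of $\widetilde G_n{}'$ agrees with the sign of this expression, so it suffices to track the latter.

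I would first \textbf{localize}. Since $B$ is not identically zero on $(0,\pi/2)$, fix $x_0$ with $B(x_0)\neq 0$ and choose $\beta>0$ and a closed interval $[a,b]\subset(0,\pi/2)$ on which $|B|\geq\beta$ and $B$ keeps a constant sign. Shrinking if necessary, pick $\delta\in(0,\pi/3)$ with $[a,b]\subset[\delta,(\pi-\delta)/2]$, so that Lemma~\ref{lem3.2} gives $\sin 2h_n^B(x)\geq c(\delta)>0$ there for all large $n$.

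The heart of the argument is a \textbf{dominance estimate at the grid points} $t_k^{(n)}=k\pi/(2n^2)$. By Lemma~\ref{lem3.3}, $h_n^B{}'''(t_k^{(n)})=8\alpha\cos 2t_k^{(n)}+(-1)^k(6n^{-3}B''-8nB)(t_k^{(n)})$, whose dominant contribution is the term $-8n(-1)^k B(t_k^{(n)})$ of size comparable to $n$. In contrast, the first product term $2(2-\cos 2h_n^B)h_n^B{}'h_n^B{}''$ is uniformly bounded: $2-\cos 2h_n^B\in[1,3]$ and $|h_n^B{}'|,|h_n^B{}''|\leq 2$ by Lemma~\ref{lem2.10}, while the remaining pieces $8\alpha\cos 2t_k^{(n)}$ and $6n^{-3}B''(t_k^{(n)})$ of $h_n^B{}'''$ also stay bounded. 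Hence at each $t_k^{(n)}\in[a,b]$ the right-hand side of \eqref{3.1} equals $8n(-1)^k\sin 2h_n^B(t_k^{(n)})\,B(t_k^{(n)})$ plus a remainder bounded by a constant independent of $n$ and $k$. Using $\sin 2h_n^B\geq c(\delta)$ and $|B|\geq\beta$, this leading summand has magnitude at least $8nc(\delta)\beta$, so once $n$ is large enough to swamp the bounded remainder, the sign of $\widetilde G_n{}'(t_k^{(n)})$ equals the sign of $(-1)^k B(t_k^{(n)})$; as $B$ has fixed sign on $[a,b]$, this alternates with $k$.

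Finally I would \textbf{count}. Since $\widetilde G_n{}'$ takes opposite signs at consecutive grid points $t_k^{(n)},t_{k+1}^{(n)}$ in $[a,b]$, the intermediate value theorem forces $\widetilde G_n{}'$ to vanish, and in fact to change sign, inside each gap $(t_k^{(n)},t_{k+1}^{(n)})$, producing a local maximum or minimum of $\widetilde G_n$ there; extrema in disjoint gaps are distinct. The number of grid points $t_k^{(n)}$ falling in $[a,b]$ is $(b-a)\cdot 2n^2/\pi+O(1)\to\infty$, so the number of interior extrema of $\widetilde G_n(r)$, which is the curvature along each meridian, tends to infinity with $n$. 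The \textbf{main obstacle} is precisely the dominance step: one must secure, \emph{uniformly} in $k$ across all of $[a,b]$, that the order-$n$ third-derivative term overwhelms both the bounded product term and the lower-order pieces of $h_n^B{}'''$. This is exactly what the uniform bounds of Lemmas~\ref{lem2.9} and \ref{lem2.10} together with the lower bound $c(\delta)$ of Lemma~\ref{lem3.2} are designed to furnish, and extracting a threshold for $n$ that is genuinely independent of $k$ is the delicate point.
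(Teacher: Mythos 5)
Your proposal is correct and follows essentially the same route as the paper: the first claim via Proposition~\ref{prop2.12}, then the evaluation of \eqref{3.1} at the grid points $t_k^{(n)}=k\pi/(2n^2)$ using Lemma~\ref{lem3.3}, the uniform bounds of Lemmas~\ref{lem2.10} and~\ref{lem3.2} to show the $8n(-1)^k\sin 2h_n^B\cdot B$ term dominates on a subinterval where $|B|$ is bounded below, and the resulting alternation of signs of $\widetilde G_n{}'$ at consecutive grid points. The paper packages the bounded remainder as $f_{n,k,B}$ with the explicit estimate \eqref{3.5} and the ratio $\epsilon_{n,k,B}$, which is exactly the uniform-in-$k$ threshold you flag as the delicate point.
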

\begin{proof}
Since the first claim is clear from Propositions \ref{prop2.7} and \ref{prop2.12},  we omit the proof.
By Lemmas \ref{lem3.1} and \ref{lem3.3},
\begin{equation}\label{3.4}
2\widetilde G_n{}'(t_k^{(n)})\cdot\sin^2h_n^B(t_k^{(n)})=f_{n,k,B}+(-1)^k8n\cdot\sin 2h_n^B(t_k^{(n)})\cdot B(t_k^{(n)}),
\end{equation}
where 
\begin{multline*}
f_{n,k,B}:=2(2-\cos2h_n^B(t_k^{(n)}))\cdot h_n^B{}'(t_k^{(n)})\cdot h_n^B{}''(t_k^{(n)})\\
-\sin2h_n^B(t_k^{(n)})
(8\alpha\cos2t_k^{(n)}+(-1)^k6n^{-3}B''(t_k^{(n)})).
\end{multline*}
From lemma \ref{lem2.10} and the triangle inequality, it follows that
for any $n>N_0,$
\begin{equation}\label{3.5}
|f_{n,k,B}|<3\cdot2^3+8\alpha+6N_0^{-3}\max_{[0,\pi/2]}|B''(x)|<28+6N_0^{-3}\max_{[0,\pi/2]}|B''(x)|.
\end{equation}
Since $B(x)$ is not identically zero, we can find a closed interval $I\subset (0,\pi/2)$ such that $\min_{I}|B(x)|>0.$

Choose any number $\delta\in(0,\pi/3)$  satisfying $I\subset(\delta, (\pi-\delta)/2)$ 
  and fix it.
By Lemma \ref{lem3.2}, $\sin 2h_n^B(x)\geq c(\delta)$ for all $x\in I$ and all $n>N_0(\delta,B).$
Choose any integer $N_1>N_0$ so as to satisfy 
$$28+6N_0^{-3}\max_{[0,\pi/2]}|B''(x)|<4N_1c(\delta)\min_{ I}|B(x)|.$$
For each $n>\max\{N_1,N_0(\delta,B)\},$ let $I_n$ denote the set of all positive integers $k$ satisfying $t_k^{(n)}=k\pi/(2n^2)\in I.$
Then, by \eqref{3.5}, for any $n>\max\{N_1, N_0(\delta,B)\},$ and any positive integer $k$ satisfying $k\in I_n,$
$|\epsilon_{n,k,B}|<1/2,$
where $\epsilon_{n,k,B}:=f_{n,k,B}\left ( (-1)^k8n\cdot\sin 2h_n^B( t_k^{(n)})\cdot B(t_k^{(n)}) \right)^{-1}.$
Thus, from \eqref{3.4}, we get
\begin{equation}\label{3.6}
2\widetilde G_n{}'(t_k^{(n)})\cdot\sin^2h_n^B(t_k^{(n)})=(-1)^k8n\cdot\sin 2h_n^B(t_k^{(n)})\cdot B(t_k^{(n)})(\epsilon_{n,k,B}+1).
\end{equation}
Note that $\epsilon_{n,k,B}+1>0$ for all $k\in I_n,n>\max\{N_1,N(\delta,B)\}.$
Hence,
$\widetilde G_n{}'(t_{k}^{(n)})\cdot\widetilde G_n{}'(t_{k+1}^{(n)})<0$
for all integers $k\in I_n$ with $k+1\in I_n.$
Therefore, the claim of our theorem is  clear, since the number of the elements of the set $I_n$ tends to infinity as the number $n$ goes to infinity.

$\qedd$
\end{proof}

\noindent
{\it Proof of Theorem A}\medskip\\
The even function $B(x)=\sin^2 2x$ satisfies the properties \eqref{2.17}, and \eqref{2.18}.
By substituting $\alpha=1/3\in(0,1/2),$ and $B(x)=\sin^2 2x$ in the equation \eqref{3.3},
we get  the equation
 \eqref{1.5} and hence  Theorem A.

\begin{example}\label{ex3.5}
For any smooth function $f,$ the smooth even function
$B(x)=\sin^22x\cdot f(\cos^2 x)$ satisfies \eqref{2.17} and \eqref{2.18}.
\end{example}
%%%%%%%%%%%%%%%%%%%%%%%%%%%% 
%%%%%%%%%%%%%%%%%%%%%%%%%%%%%%%%%%%%%%%%%%%%%%%%%%%%%%%%%%%
\section{Remarks and an open problem} %sec4

It was proven in \cite[Theorem 3.1, Lemma 3.4]{ST2} that the cut locus of a point $p\in \theta^{-1}(0)$ on a 2-sphere of revolution $(\Sph^2,dr^2+m(r)^2d\theta^2)$
(with reflective symmetry with respect to the equator)  is a subarc of $\theta^{-1}(\pi)$, and that the half period function is increasing, if the Gaussian curvature function is decreasing from the south pole to the point on the equator along each meridian.
We do not know whether the increasing property of the half period function  is a sufficient condition for a 2-sphere of revolution to have the same cut locus structure as above.
A sufficient condition for a 2-sphere of revolution to have the same cut locus structure is given in \cite[Theorem 3.7]{BCST}.
The following open problem is a dual of {\sc Theorem A}.

\begin{problem}
Find a family of 2-spheres of revolution $\{M_n\}_n$ such that the cut locus of
each point of $\theta^{-1}(0)$ is a subarc of the opposite half meridian $\theta^{-1}(\pi)$ and the number 
of points on each meridian which assume a local maximum or minimum of the Gaussian curvature function on the meridian goes to infinity as $n$ tends to infinity.
\end{problem}

%------------------------------------

\bigskip

\noindent School of Science,
Department of Mathematics,\\
Tokai University,
Hiratsuka City, Kanagawa Pref., 259\,--\,1292,
Japan

\medskip
\noindent
Toyohiro Akamatsu
\noindent
{\tt akamatsu@tokai-u.jp }

\medskip
\noindent
Minoru Tanaka
\noindent
{\tt
tanaka@tokai-u.jp}

\medskip
\noindent
Masaru Yamaguchi
\medskip
\noindent
{ \tt ym24896@tsc.u-tokai.ac.jp}
\medskip
\noindent

\noindent Faculty of Economics,\\
Hosei University,
4342 Aihara-machi,
Machida, Tokyo, 194\,--\,0298,
Japan

and

\noindent Molecular and Integrative Biosciences Research Programme,\\
Faculty of Biological and Environmental Sciences,\\
University of Helsinki, Finland

\medskip
\noindent
Robert Sinclair
\medskip
\noindent
{\tt  sinclair.robert.28@hosei.ac.jp}

\end{document}